\documentclass{article}
\usepackage{amssymb,amsmath,amsthm,graphicx,kotex}

\def\qed{\hfill {\hbox{${\vcenter{\vbox{               %HOLLOW SQUARE
   \hrule height 0.4pt\hbox{\vrule width 0.4pt height 6pt
   \kern5pt\vrule width 0.4pt}\hrule height 0.4pt}}}$}}}

\def\tr{\, \triangleright\, }
\def\utr{\, \underline{\tr}\, }
\def\otr{\, \overline{\tr}\, }

\newtheorem{theorem}{Theorem}
\newtheorem{lemma}[theorem]{Lemma}
\newtheorem{proposition}[theorem]{Proposition}
\newtheorem{corollary}[theorem]{Corollary}

\theoremstyle{definition}
\newtheorem{example}{Example}
\newtheorem{definition}{Definition}
\newtheorem{remark}{Remark}

\date{}

\title{\Large \textbf{Permutation Jones Polynomials}}

\author{Sam Nelson\footnote{Email: Sam.Nelson@cmc.edu. Partially supported by Simons Foundation collaboration grant 702597.}}

\begin{document}
\maketitle

\begin{abstract}
We introduce a generalization of the Jones polynomial for classical and 
virtual knots and links using colorings by a permutation $\sigma:X\to X$
of a finite set $X$. For $X=\{1\}$ and for classical knots, the invariant 
is equivalent to the usual Jones polynomial; for $X$ with cardinality greater 
than 1 the invariant expresses distinct information from the Jones polynomial 
for virtual knots and for classical and virtual links. We establish some 
properties of the new invariants and compute the polynomials for classical 
and virtual knots and links of small crossing number for a few small 
permutations. 
\end{abstract}

\parbox{6in} {\textsc{Keywords:} skein invariants, quantum invariants,
biquandle brackets, virtual knots and links

\smallskip

\textsc{2020 MSC:} 57K12 }

\section{Introduction}\label{I}

The classical skein invariants such as the Jones/Kauffman bracket polynomial, 
the Alexander/Conway polynomial and the HOMFLYpt polynomial have each been
the starting point for many interesting and fruitful programs of enhancement
and generalization leading to new invariants of knots and links. Examples 
include the twisted Alexander polynomials \cite{W}, multivariable Alexander 
polynomials \cite{M}, colored Jones polynomials \cite{MM}, Khovanov homology 
\cite{Khom} and Knot Floer homology \cite{OS} for classical knots and links 
as well as the Miyazawa polynomial \cite{MP}, the Arrow polynomial \cite{KA}, 
the Sawollek polynomial \cite{KR} for virtual knots and links and more. 
Indeed, the study of the Jones polynomial for classical knots was one of the 
original motivations for the introduction of virtual knots \cite{KV}.

Originally introduced in \cite{NOSY}, \textit{biquandle virtual brackets} are 
skein invariants of biquandle-colored oriented classical and virtual knots 
and links defined using a 3-term skein relation with coefficients which
are functions of the biquandle colors, motivated by the idea that a virtual 
crossing is really a smoothing rather than a crossing. 

While working on examples for another paper \cite{NOO2}, the author came 
across a family of polynomial biquandle virtual brackets over a type 
of biquandle known as \textit{constant action biquandles} whose upper and 
lower actions are given by the same permutation for each element. These 
invariants can be defined directly via skein relations for knots and links 
with colorings by permutations $\sigma:X\to X$ without 
explicit reference to biquandles and have some interesting properties: for 
classical knots they reduce to the standard Jones polynomial, but for 
classical links they can give extra information not present in the standard
Jones polynomial and for virtual knots and links they can distinguish 
some Jones-equivalent cases while failing to distinguish some cases that are 
distinguished by the Jones polynomial.

The paper is organized as follows. In Section \ref{PJP} we define the
invariants via skein relations, provide an elementary proof of invariance 
and establish some basic properties of these invariants.
In Section \ref{CE} we show examples of computing the invariant 
and provide tables of invariant values for various sets of classical and 
virtual knots and links. We conclude with open questions in Section \ref{Q}.

This paper, including all text, diagrams and computational code,
was produced strictly by the author without the use of generative AI in any 
form.

\section{Permutation Jones Polynomials}\label{PJP}

We begin this section with a definition.

\begin{definition}
Let $X=\{1,2,\dots,n\}$, let $\sigma:X\to X$ be a bijection and $D$ a diagram
of an oriented classical or virtual knot or link. Then a 
\textit{$\sigma$-coloring} of $D$ is an assignment of an element of $X$ to each
semiarc in $D$ (i.e., each edge in the graph obtained by regarding crossing 
points as vertices) such that at each crossing we have the following
picture:
\[\includegraphics{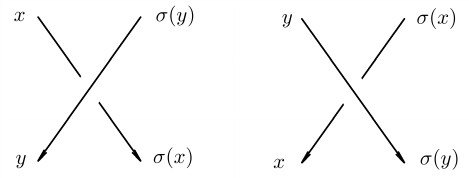}.\]
That is, going under a crossing point picks up $\sigma^{\mathrm{writhe}}$ and
going over a crossing point picks up $\sigma^{-\mathrm{writhe}}$.
\end{definition}

We will denote a permutation $\sigma:\{1,2,\dots, n\}\to \{1,2,\dots, n\}$
by the vector of its images, i.e., we will write 
\[\sigma=[\sigma(1),\sigma(2),\dots,\sigma(n)].\]

\begin{theorem}
The set $\mathcal{C}_{\sigma}(L)$ of $\sigma$-colorings of a classical or 
virtual oriented knot or link $L$ is invariant in the sense that for any 
two diagrams $D,D'$ related by Reidemeister moves there is a bijection 
$f:\mathcal{C}_{\sigma}(D)\to\mathcal{C}_{\sigma}(D')$ such that
each $\sigma$-coloring $D_j$ of $D$ is related to its image $f(D_j)$
by $\sigma$-colored Reidemeister moves. 
\end{theorem}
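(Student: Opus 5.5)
It suffices to treat a single Reidemeister move, classical or virtual, since a sequence of moves composes the bijections. So fix $D$ and $D'$ differing by one move inside a disk $B$, with $D$ and $D'$ agreeing outside $B$. The semiarcs that cross $\partial B$ are common to both diagrams. I will define $f$ by keeping every color on semiarcs outside $B$ and filling in the colors inside $B$. For this to give a well-defined bijection, I need the following local fact: for each move and each assignment of colors to the boundary semiarcs, the number of valid colorings of the inside of $B$ in $D$ equals the number in $D'$. In fact I expect both numbers to be $0$ or $1$, with the same answer on both sides. Once this holds, $f$ is a bijection, and by construction $f(D_j)$ is obtained from $D_j$ by the colored version of the move.

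The central observation is that the coloring rule is deterministic. At a crossing of writhe $\varepsilon$, the outgoing under-semiarc gets $\sigma^{\varepsilon}$ of the incoming under color, and the outgoing over-semiarc gets $\sigma^{-\varepsilon}$ of the incoming over color. At a virtual crossing the colors pass through unchanged. Since $\sigma$ is a bijection, each rule can be run forwards or backwards, so the colors on the input semiarcs of $B$ determine every color inside $B$ on either side. The content of the argument is therefore checking that the outputs agree.

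This is where the crossing-type structure enters. Each strand passing through $B$ picks up $\sigma^{k}$, where $k$ is the signed number of under-passages minus the signed number of over-passages along that strand inside $B$. The color changes depend only on the strand's own color, not on the colors of the other strands (this is the constant action feature). So it suffices to check that these exponents are unchanged by each move.

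The move-by-move checks are as follows.
\begin{itemize}
\item For Reidemeister I, the single strand passes the crossing once under and once over with the same writhe $\varepsilon$. It picks up $\sigma^{\varepsilon}\sigma^{-\varepsilon}=\mathrm{id}$. This matches the trivial strand, and the intermediate semiarc is forced, so the count is $1=1$.
\item For Reidemeister II, the two crossings have opposite writhes. Each strand passes both crossings in the same role, so it picks up $\sigma^{\varepsilon}\sigma^{-\varepsilon}=\mathrm{id}$. This matches the two parallel strands.
\item For Reidemeister III, in every oriented version the top strand is over at both of its crossings, the bottom strand is under at both, and the middle strand is under at one and over at the other. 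The writhes of corresponding crossings are preserved by the move. Hence each strand's exponent is the same before and after.
\item The virtual moves and the mixed move are immediate, since virtual crossings do not change colors and the one classical crossing in the mixed move keeps its writhe and its over/under roles.
\end{itemize}

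The main obstacle I expect is the bookkeeping that makes this uniform rather than a check of many oriented diagrams. One concern is to state precisely that the writhe of each crossing, and the role each strand plays there, are preserved under the correspondence of crossings in moves II and III. Another is to confirm that no internal semiarc is left undetermined, for instance in the non-braid-like oriented Reidemeister II move, where the inner semiarcs are reached by following the strands through the crossings. I would handle this by fixing a generating set of oriented moves (Polyak's) and verifying only those. Because all the maps are powers of the single permutation $\sigma$ and therefore commute, the verification comes down to comparing exponents.
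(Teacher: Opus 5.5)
Your proof is correct, but it takes a different route from the paper. The paper proves this theorem in one line. A $\sigma$-coloring is exactly a coloring by the constant action biquandle $x\utr y=x\otr y=\sigma(x)$, with virtual crossings passing colors through unchanged. The statement is therefore an instance of the standard fact that biquandle colorings correspond bijectively under Reidemeister moves. The paper only remarks that the moves could also be checked by hand.

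You carry out that hand check and organize it efficiently. Because the action does not depend on the other strand's color, each strand of the local tangle evolves independently. Its exit color is $\sigma^{k}$ of its entry color, where $k$ is the signed under-count minus the signed over-count. So the extension problem inside $B$ has $0$ or $1$ solutions, governed only by these exponents, and each move visibly preserves them.

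The biquandle route buys brevity and fits the framework the paper needs anyway for the bracket invariance. Your route is self-contained and handles every oriented version of each move at once. This has two consequences for your stated concerns:
\begin{itemize}
\item The reduction to Polyak's generating set is unnecessary.
\item The worry about undetermined inner semiarcs in the reverse Reidemeister II move disappears, since every inner semiarc lies on a strand entering $B$.
\end{itemize}
Run globally, the same bookkeeping gives $|\mathcal{C}_\sigma(L)|=\prod_i|\mathrm{Fix}(\sigma^{k_i})|$, with $k_i$ the net exponent around component $i$. This explains the four colorings of the Hopf link and the absence of colorings of the virtual Hopf link.

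One point to tidy: semiarcs meeting $\partial B$ are not literally shared by $D$ and $D'$. On the crossingless side of a Reidemeister I or II move, a single semiarc joins two boundary points that lie on different semiarcs in the other diagram. Fix colors at the boundary points of $B$ instead; your condition with $k=0$ then covers this case automatically.
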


\begin{proof}
This follows easily from the 
fact that $\sigma$-colorings are biquandle colorings by the constant action
biquandle structure on $X$ defined by $x\utr y=x\otr y=\sigma(x)$. The 
interested reader can also check the Reidemeister moves directly.
\end{proof}

\begin{example}\label{ex:1}
The Hopf link has four $[2,1]$-colorings as shown.
\[\includegraphics{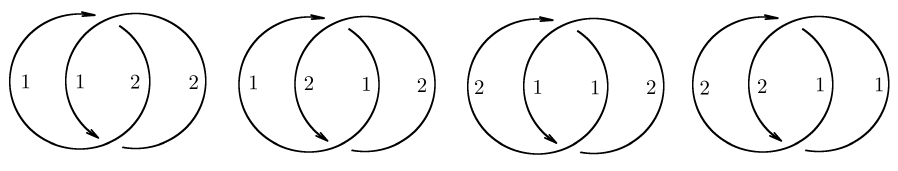}\]
\end{example}

\begin{remark}
For some virtual knots and links $L$ and permutations $\sigma:X\to X$,
$\mathcal{C}_{\sigma}(L)$ may be empty. For example, there are no 
$[2,1]$-colorings of the 
\textit{virtual Hopf link}
\[\includegraphics{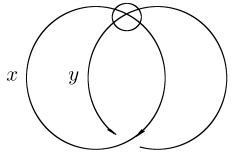}\]
since such a coloring would require $x=\sigma(x)$ and $y=\sigma(y)$ but
$\sigma=[2,1]$ has no fixed points.
\end{remark}

We will now introduce a 3-variable polynomial invariant of
classical and virtual knots and links associated to a permutation
$\sigma:X\to X$. For each $\sigma$-coloring of a diagram $D$, we
compute the state-sum using the following skein relations:
at \textit{monochromatic} crossings, i.e., crossings where the colors
on the left strands are equal, we have a Kauffman bracket-style
skein relation, 
\[\includegraphics{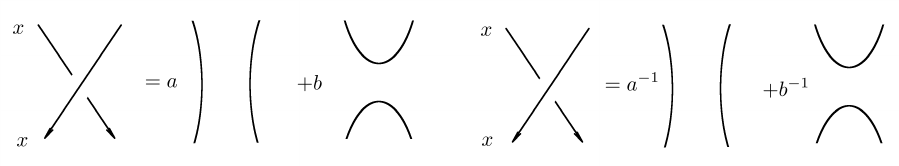}\]
while at \textit{polychromatic} crossings, i.e.,
crossings with unequal colors on the left, we virtualize the crossing
and multiply by a coefficient of $v^{\mathrm{crossing \ sign}}$. 
\[\includegraphics{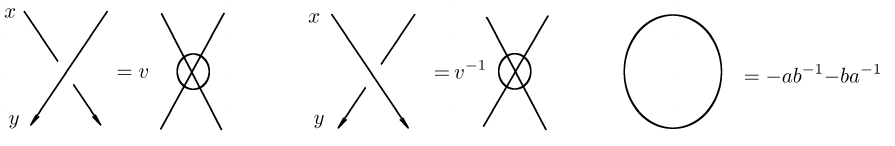}\]

Each component circle in a Kauffman state (which may have virtual crossings) 
contributes a factor of $-ab^{-1}-ba^{-1}$. We note that the value of Kauffman
state in unchanged by pure virtual Reidemeister moves.
We normalize the state-sum to the writhe-zero case by multiplying 
by $(-a^2b^{-1})^{-\mathrm{writhe}(L)}$ 
\[\includegraphics{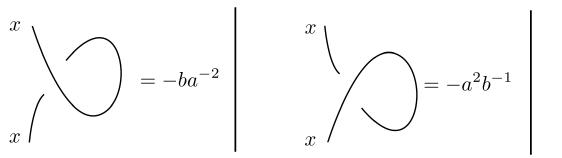}\]
and sum over the set of $\sigma$-colorings.
That is, we have:

\begin{definition}
Let $X=\{1,2,\dots, n\}$ be a finite set and $\sigma:X\to X$ a bijection.
Then for any classical or virtual knot or link $L$ represented by a diagram
$D$ we define the \textit{permutation Jones polynomial} $J_{\sigma}(K)$ to be
\[\sum_{d\in \mathcal{C}_{\sigma}(L)} (-a^2b^{-1})^{-\mathrm{writhe}(D)} 
\left(\sum_{s\in \mathcal{S}(d)}a^xb^yv^z(-ab^{-1}-ba^{-1})^{|s|}\right) 
\]
where $|s|$ is the number of components in the fully-smoothed 
Kauffman state (which may have virtual crossings) $s$,
$\mathcal{S}(d)$ is the set of Kauffman states of the $\sigma$-colored diagram
$d$ and $a^xb^yv^z$ is the monomial of skein coefficients associated to 
the state $s$. 
\end{definition}

We next come to our main result.

\begin{theorem}
The polynomial $J_{\sigma}$ is an invariant of classical and virtual knots and
links for every permutation $\sigma:X\to X$.
\end{theorem}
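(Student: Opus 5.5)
The approach is to use the colored-coloring bijection of the preceding theorem. For two diagrams $D,D'$ related by a single Reidemeister move (classical or virtual), that theorem gives a bijection $f:\mathcal{C}_\sigma(D)\to\mathcal{C}_\sigma(D')$ under which each colored diagram $d$ and its image $f(d)$ differ only by a $\sigma$-colored move inside a disk, with identical colors outside. It therefore suffices to show that for each colored move the normalized colored state sum
\[(-a^2b^{-1})^{-\mathrm{writhe}}\sum_{s}a^xb^yv^z(-ab^{-1}-ba^{-1})^{|s|}\]
is unchanged. Summing over colorings then gives invariance of $J_\sigma$. Pure virtual moves and the mixed move are immediate: virtual crossings carry no skein data and, as noted above, component counts of states are unchanged. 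For the mixed move, we expand the classical crossing first; each resulting term is related to the corresponding term on the other side by pure virtual moves, and the coloring condition guarantees that the monochromatic/polychromatic status of the crossing is the same on both sides.

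For the classical moves, I will split into cases according to whether the crossings involved are monochromatic or polychromatic.

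\textbf{Reidemeister I.} In a kink the two left strands are consecutive arcs of the same strand. The coloring rule forces them to have equal colors (one checks that $x$ and $\sigma^{\pm1}$-shifts coincide exactly as in the constant action biquandle condition $x\tr x$), so the crossing is always monochromatic. The usual bracket computation with loop value $\delta=-ab^{-1}-ba^{-1}$ then gives a factor $a\delta+b=-a^2b^{-1}$ (or its inverse for the other sign), which the writhe normalization cancels.

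\textbf{Reidemeister II.} The two crossings have opposite signs and share the same pair of colors on the left, so they are either both monochromatic or both polychromatic. In the monochromatic case the standard Kauffman bracket argument with $a$ and $b$ and $\delta=-ab^{-1}-ba^{-1}$ works, since $ab\delta+a^2+b^2=0$. In the polychromatic case both crossings are virtualized with coefficients $v$ and $v^{-1}$, and the result is removed by a virtual II move. The writhe is unchanged in both cases.

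\textbf{Reidemeister III.} This is the step I expect to be the main obstacle, because the three crossings can have mixed statuses. The plan is to enumerate colorings of the three input strands, up to which pairs are equal; the coloring rule shifts colors by powers of $\sigma$ uniformly, so equality of colors at each crossing is determined by equality of the inputs. There are three cases.
\begin{enumerate}
\item All colors are equal. All crossings are monochromatic, and the usual bracket III argument (derived from II) applies.
\item All colors are distinct. All crossings are virtualized, and the result follows from a virtual III move, since the $v$-powers match crossing by crossing.
\item Exactly one pair is equal. One crossing, corresponding to the same pair on both sides, is monochromatic and the other two are virtualized. After virtualizing, the two sides are related by mixed and virtual moves applied to the remaining classical crossing, which is then expanded identically on both sides.
\end{enumerate}
Since $\sigma$ is constant action, crossing colors shift consistently and the equal pair is the same strand pair on both sides. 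This is the point I would check carefully with the diagrams.
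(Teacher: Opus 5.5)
Your proposal is correct and follows essentially the same route as the paper's elementary proof: a colored move-by-move check in which RI crossings are always monochromatic and absorbed by the writhe normalization, RII crossings are both mono or both poly, the all-mono RIII is handled as for the Kauffman bracket, and the mixed RIII cases and the mixed move reduce to virtual and mixed moves (the paper also gives the one-line alternative that $J_\sigma$ is a biquandle virtual bracket). The only imprecision is in your RIII step: monochromaticity is an equality of $\sigma$-shifted inputs, not of raw inputs (with the paper's rule, at a positive crossing the over-input must equal $\sigma$ of the under-input), but once each strand's input is normalized by the appropriate power of $\sigma$ it becomes ordinary equality, which is transitive and the same on both sides of the move, so your three cases are exhaustive and match up as you claim.
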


\begin{proof}
The quickest proof is to observe that $J_{\sigma}$ is a biquandle virtual 
bracket; hence the multiset of state-sum values over of the set of 
colorings is an invariant. It then follows immediately that the sum of 
the elements of the multiset is also an invariant. 

For those unfamiliar with biquandle brackets and biquandle virtual brackets, 
we present an elementary proof of the invariance of $J_{\sigma}$ independent 
of the biquandle virtual bracket construction. It suffices to show that 
the state-sum contributions are equal on both sides of each move in a 
generating set of oriented $\sigma$-colored Reidemeister moves for each 
possible coloring type. One such set consists of all four oriented type I 
moves, all four oriented type II moves and the all-positive type III move 
as well as the mixed virtual move; the purely virtual moves do not change the 
state sum value.

First, let us consider the direct Reidemeister II move. There are two possible
cases -- either both crossings are monochromatic 
\[\includegraphics{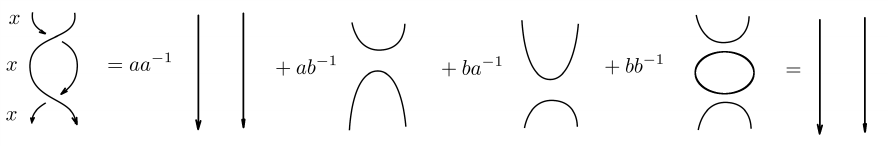}\]
or both are polychromatic
\[\includegraphics{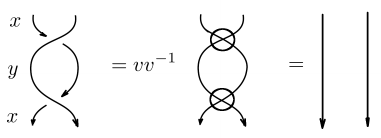}.\]
We have depicted one of the two possible moves; the other is similar.

Next, let us consider the reverse Reidemeister II move. Again there are two 
possible cases -- either both crossings are monochromatic
\[\includegraphics{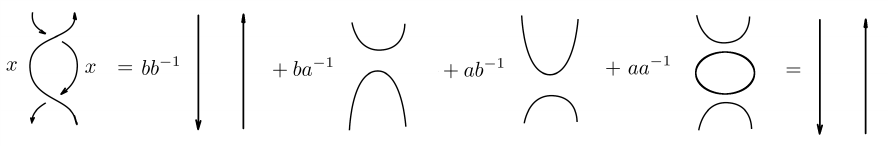}\]
or both are polychromatic
\[\includegraphics{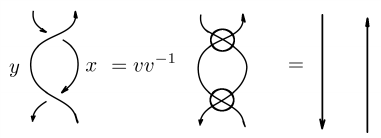}.\]
We have again depicted one of the two possible moves; the other is similar.

Next let us consider the all-positive Reidemeister III move. The 
all-monochromatic case is the same as for the classical Kauffman bracket; 
let us consider the the mixed cases. We have
\[\includegraphics{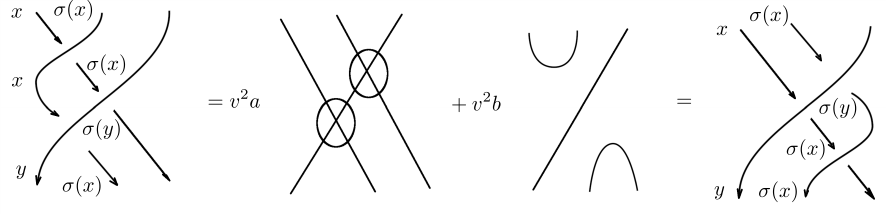},\]
\[\includegraphics{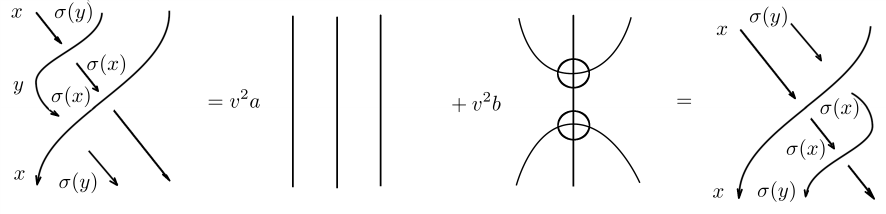}\]
and
\[\includegraphics{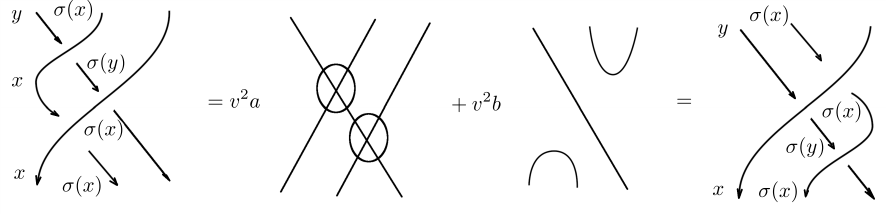}.\]

Next, we note that the mixed virtual move does not change the 
state-sum value, either in the monochromatic case
\[\includegraphics{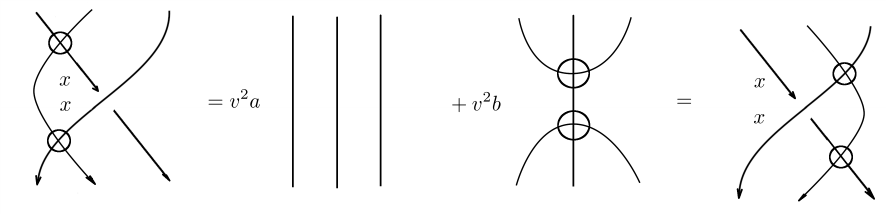}\]
or in the polychromatic case
\[\includegraphics{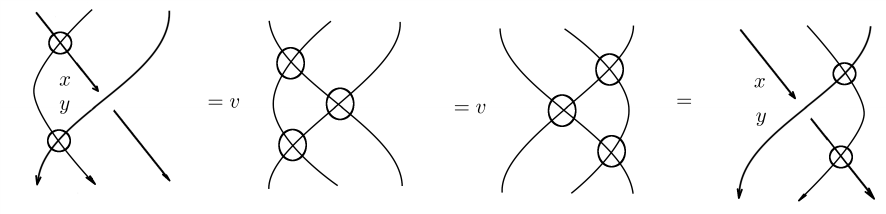}.\]

Finally, as in the case of the Kauffman bracket, the Reidemeister I move
does change the state-sum value in a predictable way; hence we normalize by
adjusting to the writhe-zero case. We note that Reidemeister I crossings
are always monochromatic; we depict two of the four cases.
\[\includegraphics{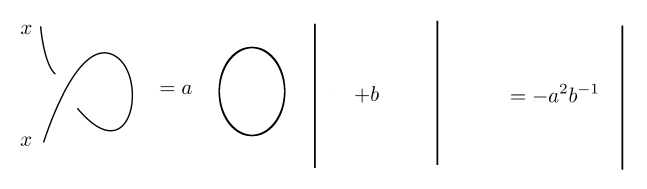}\]
\[\includegraphics{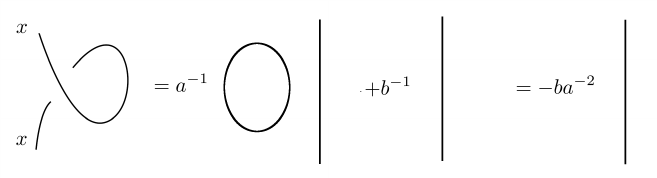}.\]

\end{proof}

\begin{corollary}\label{cor:1}
The total degree of each monomial in $J_{\sigma}(L)$ is always zero.
\end{corollary}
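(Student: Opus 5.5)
We track total degree in $a,b,v$ through the state-sum defining $J_{\sigma}$ and check that every factor that can contribute to a monomial has total degree zero. Since $J_{\sigma}(L)$ is a sum over colorings $d\in\mathcal{C}_{\sigma}(L)$ and states $s\in\mathcal{S}(d)$ of products of such factors, every monomial in the expansion has total degree zero. Cancellation between terms can remove monomials but cannot create monomials of other degrees, so this suffices.

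The factors are as follows.

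\begin{itemize}
\item \textbf{Normalization.} $(-a^2b^{-1})^{-\mathrm{writhe}(D)}$ is $\pm a^{-2w}b^{w}$. Its total degree is $-2w+w=-w$, where $w=\mathrm{writhe}(D)$. So this factor alone is not degree zero, and it must be balanced by the state contributions.
\item \textbf{Polychromatic crossings.} Each contributes $v^{\pm1}$, with exponent equal to the crossing sign, so total degree $\epsilon_c$.
\item \textbf{Monochromatic crossings.} From the bracket-style skein relation pictured in the paper, each smoothing coefficient is $a$ or $b$ (possibly inverted according to sign). By the Reidemeister I normalization factor $-a^2b^{-1}$, a positive kink contributes $a\cdot(\text{loop})+b$ or similar, giving total degree $+1$ for each smoothing at a positive crossing. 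Hence each smoothing coefficient at a crossing of sign $\epsilon_c$ should have total degree $\epsilon_c$.
\item \textbf{Loop factor.} $-ab^{-1}-ba^{-1}$ is homogeneous of degree $0$, so component counts $|s|$ are irrelevant.
\end{itemize}

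Combining these, a state $s$ of a coloring $d$ contributes a monomial whose total degree is $\sum_c \epsilon_c = w$ times a degree-zero factor. After multiplying by the normalization of degree $-w$, the total is $0$.

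The main obstacle is pinning down the monomial skein coefficients from the pictured relation without seeing the figure. We would reconstruct them from the requirement, already used in the invariance proof, that a monochromatic Reidemeister I kink changes the state sum by exactly $(-a^2b^{-1})^{\pm1}$. That forces each smoothing coefficient at a positive crossing to be $a$ or $b$ (degree $1$) and at a negative crossing $a^{-1}$ or $b^{-1}$ (degree $-1$). For example, a positive kink gives $a\delta+b$ or $a+b\delta$ with $\delta=-ab^{-1}-ba^{-1}$, and these equal $-a^2b^{-1}$ up to the appropriate choice.

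An alternative check that avoids relying on the figure: substitute $a\mapsto ta$, $b\mapsto tb$, $v\mapsto tv$ and observe that each crossing scales by $t^{\epsilon_c}$ while the normalization scales by $t^{-w}$. Then $J_{\sigma}(L)$ is invariant under this scaling, which is exactly the statement that it is homogeneous of degree zero.
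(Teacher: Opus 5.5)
Your proposal is correct and is essentially the paper's argument: the loop factor $-ab^{-1}-ba^{-1}$ is homogeneous of degree zero, and each classical crossing contributes a coefficient of degree equal to its sign ($a^{\pm1}$ or $b^{\pm1}$ if monochromatic, $v^{\pm1}$ if polychromatic), so the coefficient product has degree $\mathrm{writhe}(D)$, which the normalization $(-a^2b^{-1})^{-\mathrm{writhe}(D)}$ exactly cancels. The one loose spot is your claim that the Reidemeister I condition ``forces'' the coefficients, which it does not on its own; however, the coefficients you reconstructed are exactly those in the paper's definition, as the Hopf link computation in Example~\ref{ex:2} and the mirror-image proposition confirm.
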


\begin{proof}
Each monomial comes from a product of factors $-a^{-1}b$ or $-ab^{-1}$ from
Kauffman states times a skein coefficient product of total degree equal to the
writhe of the diagram; the normalization factor $(-a^2b^{-1})^{-\mathrm{writhe}(d)}$
then subtracts exactly the writhe of the diagram from the total degree for each
monomial.
\end{proof}

In light of Corollary \ref{cor:1}, we could reduce the number of variables
in $J_{\sigma}$ to two by, for example, setting $b=a^{-1}$. However, the $v$ 
variable sometimes replaces $a$ and sometimes replaces $b$, and hence it
seems preferable to keep all three.

The classical Kauffman bracket/Jones polynomial can be obtained from $J_{[1]}$
by setting $b=a^{-1}$ and dividing out a factor of $-a^2-a^{-2}$ to set the 
unknot value to 1. In the case of classical knots, we have the following lemma:

\begin{lemma} \label{lem:1}
In any $\sigma$-coloring of a classical knot, all crossings are monochromatic.
\end{lemma}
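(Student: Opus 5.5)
The plan is to track colors along the single component of the knot and show that, by a planar linking-number argument, the two semiarcs whose colors define monochromaticity always carry the same color. Fix a crossing $c$ of a classical knot diagram $D$ with writhe $w=\pm1$ and a $\sigma$-coloring. Traversing $D$ from a semiarc $e$ to a semiarc $e'$ changes the color by $\sigma^{k}$. Here $k$ is the sum of the contributions of the crossings passed: $+w(c')$ for each passage under a crossing $c'$ and $-w(c')$ for each passage over it. This holds because $\sigma$-colorings are constant action biquandle colorings, so the color is only ever acted on by powers of the single permutation $\sigma$.

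Since $D$ is a knot, $c$ is visited twice, once as an over-passage and once as an under-passage. This splits the closed curve into two lobes $A$ and $B$, each of which runs from one passage through $c$ to the other. I will apply the orientation-respecting smoothing at $c$, which turns $A$ and $B$ into two oriented closed curves. Together with the remaining crossings, these form a two-component classical link diagram $A\cup B$.

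The crossings met while traversing $A$ are of two kinds.
\begin{itemize}
\item Self-crossings of $A$ are passed twice, once over and once under, so they contribute $w-w=0$.
\item Crossings between $A$ and $B$ contribute $\sum_{A\ \mathrm{under}\ B} w(c') - \sum_{A\ \mathrm{over}\ B} w(c')$.
\end{itemize}
For a classical two-component diagram, the sum of the signs of the crossings where $A$ passes over $B$ equals the sum where $B$ passes over $A$; both equal the linking number. This is the standard Jordan-curve fact, proved by noting that each sum is invariant under crossing changes of the other type, or by reducing to the split case. Hence the net exponent accumulated along $A$ is $0$, and likewise along $B$. It follows that the semiarc of $c$ where $A$ begins and the semiarc where $A$ ends carry the same color.

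It remains to match this with the definition. With the convention in the crossing figure (under passage picks up $\sigma^{w}$, over passage picks up $\sigma^{-w}$), I will check that the two semiarcs joined by the lobe $A$ are exactly the two "left" strands used to define monochromaticity. Concretely, one lobe connects an outgoing strand of $c$ to an incoming strand of $c$, and these are the two strands on the same side of the crossing. The equality of colors then says precisely that $c$ is monochromatic. Since $c$ was arbitrary, all crossings are monochromatic.

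I expect the main obstacle to be this bookkeeping. One must verify carefully, against the crossing picture, which pair of semiarcs the zero-net-exponent lobe connects, so that the conclusion is the equality of the \emph{left} colors and not, say, a relation off by $\sigma^{\pm1}$. The linking-number symmetry is the substantive geometric input. It is exactly where classicality is used: with virtual crossings, $A$ and $B$ may meet at virtual points, and the two over/under sums can differ, as the virtual Hopf link shows. I would state this explicitly as the reason the lemma fails for virtual knots.
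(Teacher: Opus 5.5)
Your proof is correct, and its skeleton is the paper's: follow the lobe from the outgoing semiarc on one side of $c$ back to the incoming semiarc on that same side, and show that the accumulated power of $\sigma$ vanishes.

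The bookkeeping you worry about comes out as you predict. On a knot, the two incoming semiarcs at $c$ differ by $\sigma^{\pm1}$, and so do the two outgoing ones. That reading would contradict the paper's remark that Reidemeister I crossings are always monochromatic. So the ``left'' pair must be one incoming and one outgoing semiarc on the same side, which is exactly the pair a lobe joins. The paper likewise starts from a label $x$ at the crossing and returns to it along the loop.

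Where you differ is in how the zero is obtained.
\begin{itemize}
\item The paper observes that a crossing contributes $+w$ (loop passes under) or $-w$ (loop passes over) according only to whether the other strand crosses the loop inward or outward. Over/under information therefore drops out, and planarity pairs each inward crossing with an outward one.
\item You keep the over/under split, cancel the lobe's self-crossings, and invoke the equality of the two one-sided linking sums for the smoothed diagram $A\cup B$.
\end{itemize}
Your version explicitly handles a lobe that is not a simple closed curve. The paper's phrase ``the planar disk bounded by the loop'' tacitly assumes that case away. The price is reliance on the linking-number symmetry. Its usual proof is the same in/out count, since the difference of the two one-sided sums is the algebraic intersection number of two closed plane curves.

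One small correction to your parenthetical justification. It is the \emph{difference} of the two one-sided sums, not each sum separately, that is unchanged by crossing changes. A switch at a crossing of sign $w$ removes $w$ from one sum and adds $-w$ to the other. State it that way, or simply cite the intersection-number argument.
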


\begin{proof}
Consider a crossing in a $\sigma$-coloring of a classical knot. Let us call 
the undercrossing label $x$; then as we travel around the loop counterclockwise
to return to the crossing, we encounter a series of crossings, some positive 
and some negative, some going over and some going under. 
\[\includegraphics{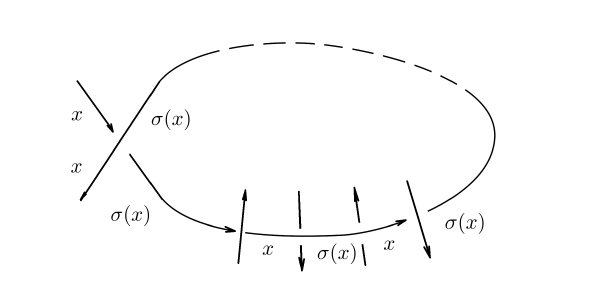}\]
Examination of the 
$\sigma$-coloring rule shows that strands crossing into the disk decrease 
the exponent of $\sigma$ on the boundary strand by 1 while strands crossing 
out of the disk increase the power of $\sigma$ on the boundary strand by 1.
Without virtual crossings, each crossing oriented into the planar disk 
bounded by the loop must be matched by one oriented out; hence when we return 
to the crossing the color is $\sigma^0(x)=x$ and the crossing is monochromatic. 
\end{proof}

We then have the following:

\begin{corollary}
For a classical knot $K$, $J_{\sigma}(K)=|\mathcal{C}_{\sigma}(K)|J_{[1]}(k)$.
\end{corollary}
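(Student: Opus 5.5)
The plan is to compare the state-sum of each individual $\sigma$-coloring of a classical knot diagram $D$ with the ordinary $J_{[1]}$ state-sum of $D$, and then sum over colorings. Fix a diagram $D$ of the classical knot $K$. By Lemma \ref{lem:1}, in every $d\in\mathcal{C}_{\sigma}(D)$ all crossings are monochromatic. Consequently the polychromatic skein relation, which virtualizes a crossing and contributes a power of $v$, is never applied. Every crossing of $d$ is smoothed with the Kauffman bracket-style relation, whose coefficients do not depend on the colors.

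The first step is to observe that the set of Kauffman states $\mathcal{S}(d)$ is the set of all $2^{c}$ choices of $A$/$B$ smoothings at the $c$ crossings of $D$. This set is independent of the coloring. The second step is to check that for each state $s$:
\begin{itemize}
\item the monomial $a^xb^y$ is determined only by which smoothing was chosen at each crossing and by the crossing signs;
\item the number of circles $|s|$ depends only on the underlying smoothed diagram;
\item the writhe factor $(-a^2b^{-1})^{-\mathrm{writhe}(D)}$ depends only on $D$.
\end{itemize}
Hence the inner state-sum for $d$ is exactly the inner state-sum computed for $D$ with the trivial permutation on $\{1\}$. For $\sigma=[1]$ the set $\{1\}$ has only one element, so every crossing is automatically monochromatic and there is exactly one coloring. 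That single coloring's contribution is $J_{[1]}(K)$.

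Summing over $\mathcal{C}_{\sigma}(D)$ then gives $|\mathcal{C}_{\sigma}(D)|\,J_{[1]}(K)$. The count $|\mathcal{C}_{\sigma}(D)|=|\mathcal{C}_{\sigma}(K)|$ is independent of the diagram by the first theorem on invariance of colorings. $J_{[1]}(K)$ is an invariant by the main theorem. So the formula is well defined and proves the corollary.

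The only point needing care is the claim that the monochromatic skein coefficients really carry no color dependence. This should be confirmed against the defining skein figure: the coefficients there are $a$, $b$ and their inverses, keyed to the crossing sign, with no reference to the labels. Once that is confirmed, the argument is a direct term-by-term identification and I expect no real obstacle.
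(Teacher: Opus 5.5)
Your proposal is correct and follows essentially the same route as the paper, which states this corollary as an immediate consequence of Lemma~\ref{lem:1}. Since every crossing in every $\sigma$-coloring is monochromatic, each coloring's normalized state-sum agrees term by term with the single $[1]$-colored state-sum, and summing over $\mathcal{C}_{\sigma}(K)$ gives the factor $|\mathcal{C}_{\sigma}(K)|$.
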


We note that classical links can have both monochromatic and polychromatic 
crossings as seen in Example \ref{ex:1}; hence for classical links $L$, 
$J_{\sigma}(L)$ can carry more information than $J_{[1]}$. In particular we have:

\begin{corollary}
Let $L=L_1\cup L_2$ be a classical link of two components $L_1$ and $L_2$.
If any term of $J_{\sigma}(L)$ includes a nonzero power of $v$, then
$L$ is not a split link.
\end{corollary}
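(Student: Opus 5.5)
We prove the contrapositive: if $L=L_1\cup L_2$ is split, then every term of $J_{\sigma}(L)$ has $v$-exponent zero. The key observation is that the variable $v$ enters the state-sum only through polychromatic crossings, which contribute $v^{\pm 1}$. Monochromatic crossings contribute only powers of $a$ and $b$, and the normalization factor $(-a^2b^{-1})^{-\mathrm{writhe}(D)}$ contains no $v$. So it suffices to show that a split link has some diagram $D$ in which every crossing is monochromatic under every $\sigma$-coloring. Then no monomial can contain $v$, and since $J_\sigma$ is an invariant by the main theorem, the same holds for any diagram of $L$.

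For a split link we choose a diagram $D$ that is a disjoint union $D_1\sqcup D_2$ of diagrams of $L_1$ and $L_2$ lying in disjoint disks. Then every crossing of $D$ is a self-crossing of a single component, and $D$ is classical. We now repeat the argument of Lemma \ref{lem:1}. Take a crossing in $D_i$ and follow one of its strands from the crossing back to itself along a loop of $D_i$. The color along the way is changed by $\sigma^{\pm1}$ at each crossing where another strand passes under or over it. The strands crossing this loop belong either to $D_i$ itself (those of $D_j$, $j\neq i$, never meet it) and form a planar configuration. As in Lemma \ref{lem:1}, each strand entering the planar disk bounded by the loop must also exit it, so the net exponent is zero. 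Hence the colors on the two left strands of the crossing agree, and the crossing is monochromatic.

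The main point to handle carefully is that Lemma \ref{lem:1} is stated for knots, so we must check that its proof only uses planarity of the loop and of the strands meeting it. That is exactly what the split diagram guarantees, since the other component is disjoint from the disk bounded by the loop. Alternatively, one can note that each $D_i$ is itself a classical knot diagram, that a $\sigma$-coloring of $D$ is just a pair of $\sigma$-colorings of $D_1$ and $D_2$ (no crossings connect them), and apply Lemma \ref{lem:1} to each. This cleaner route gives $\mathcal{C}_\sigma(D)=\mathcal{C}_\sigma(D_1)\times\mathcal{C}_\sigma(D_2)$ with all crossings monochromatic. Consequently every state-sum monomial is $v$-free, contradicting the hypothesis, so $L$ is not split.
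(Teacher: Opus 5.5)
Your proof is correct and follows essentially the same route as the paper. You pass to a split diagram $D_1\sqcup D_2$, note that $\sigma$-colorings decouple into pairs of colorings of the two classical knot diagrams, and invoke Lemma~\ref{lem:1} to conclude that every crossing is monochromatic, so no power of $v$ can appear. The paper phrases the same idea through the product $J_{\sigma}(L_1)J_{\sigma}(L_2)$, whereas you only need that no crossing is polychromatic.
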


\begin{proof}
If $L=L_1\cup L_2$ is split then each $\sigma$-coloring of $L$ contributes
a product $J_{\sigma}(L_1)J_{\sigma}(L_2)$ of $J_{\sigma}$-values of classical
knots, both of which have only monochromatic crossings and hence cannot have
nonzero powers of $v$.
\end{proof}

There are known examples of split and non-split classical links sharing
the same value of the classical Jones polynomial, e.g. Thistlethwaite's 
examples of non-split links with trivial Jones polynomial in \cite{T}.
It follows that the Jones polynomial alone does not determine whether
a link in split.

For virtual knots and virtual links, the differences between the permutation 
Jones polynomials for nontrivial permutations $\sigma$ are even more clear. 
From Lemma \ref{lem:1} we have:

\begin{corollary}
If a virtual knot $K$ has $J_{\sigma}(K)$-value with a nonzero power of $v$ for
any $\sigma$, then $K$ is not classical.
\end{corollary}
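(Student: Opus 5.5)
We argue by contraposition: if $K$ is classical, then for every permutation $\sigma$ no term of $J_{\sigma}(K)$ contains a nonzero power of $v$. Since $J_{\sigma}$ is an invariant of virtual knots by the main theorem, we may compute it from any diagram of $K$. If $K$ is classical (as a virtual knot), it has a diagram $D$ with no virtual crossings, so it suffices to show that the state-sum computed on $D$ contains no $v$.

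Fix $\sigma$ and a $\sigma$-coloring $d\in\mathcal{C}_{\sigma}(D)$. Because $D$ is a classical knot diagram, Lemma \ref{lem:1} says every crossing of $d$ is monochromatic. Hence only the Kauffman bracket-style skein relation is applied at each crossing, and its coefficients involve only $a$ and $b$. The polychromatic relation, which is the only source of the factors $v^{\pm 1}$, is never used. So every state $s\in\mathcal{S}(d)$ has monomial $a^xb^yv^0$. The circle factor $-ab^{-1}-ba^{-1}$ and the normalization $(-a^2b^{-1})^{-\mathrm{writhe}(D)}$ contain no $v$ either.

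Summing over all states and all colorings therefore gives a Laurent polynomial in $a,b$ alone. Equivalently, by the preceding corollary, $J_{\sigma}(K)=|\mathcal{C}_{\sigma}(K)|J_{[1]}(K)$, and $J_{[1]}$ has no $v$ since $X=\{1\}$ forces every crossing to be monochromatic. If $\mathcal{C}_\sigma(K)$ is empty the value is $0$, which also contains no $v$. This contradicts the hypothesis, so $K$ is not classical.

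The only delicate point is the claim that a classical knot, viewed as a virtual knot, may be represented by a diagram without virtual crossings so that Lemma \ref{lem:1} applies. This holds by definition of which virtual knots are classical, combined with invariance of $J_{\sigma}$ under all classical and virtual Reidemeister moves. We also need that no cancellation could create a $v$ term; this is immediate, since no $v$ ever appears.
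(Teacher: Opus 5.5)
Your proof is correct and takes the same approach as the paper, which says only that the result follows immediately from Lemma \ref{lem:1}. The lemma gives that on a diagram without virtual crossings every crossing of every $\sigma$-coloring is monochromatic, so no factor of $v$ can arise. Your remarks on passing to a classical diagram via invariance, and on the empty-coloring case, spell out details the paper's one-line proof leaves implicit.
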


\begin{proof}
This follows immediately from Lemma \ref{lem:1}.
\end{proof}

We end this section with a few observations about the permutation Jones 
polynomials on virtual knots and links.

\begin{proposition}
As with the classical Jones polynomial, $J_{\sigma}$ detects (vertical)
mirror images with $J_{\sigma}(\bar{L})$ obtained from $J_{\sigma}(L)$
by negating all exponents.
\end{proposition}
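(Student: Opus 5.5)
The plan is to build an explicit correspondence between diagrams, colorings and Kauffman states of $L$ and of its mirror $\bar L$, and then compare contributions term by term. Fix a diagram $D$ of $L$ and let $\bar D$ be obtained by switching every classical crossing (the vertical mirror); virtual crossings are unchanged. Every crossing sign flips, so $\mathrm{writhe}(\bar D)=-\mathrm{writhe}(D)$.

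\textbf{Step 1: colorings.} We match colorings of $D$ and $\bar D$ on the same underlying semiarcs. At a positive crossing of $D$ the under-strand picks up $\sigma$ and the over-strand picks up $\sigma^{-1}$. After the switch, the crossing is negative and the roles of over and under are exchanged. The strand formerly under is now over and picks up $\sigma^{-(-1)}=\sigma$; the strand formerly over is now under and picks up $\sigma^{-1}$. So each strand undergoes the same change of color as before. Hence the identity on semiarc labels gives a bijection $\mathcal{C}_\sigma(D)\to\mathcal{C}_\sigma(\bar D)$.

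The labels on the two incoming (left) strands are the same in both diagrams, so monochromatic crossings stay monochromatic and polychromatic ones stay polychromatic. I expect this to be the main point to check carefully against the coloring figure, in particular that ``left'' strands are identified compatibly after the switch. If the figure's convention places the labels differently, I would instead compose with the reflection of the plane, under which both the coloring rule and the notion of left strands visibly transform compatibly.

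\textbf{Step 2: states.} At a monochromatic crossing, switching the crossing exchanges the $A$-type and $B$-type smoothings, exactly as for the Kauffman bracket. Assuming the skein figure is the usual $a$ for one smoothing and $b$ for the other with the bracket conventions, each state $s$ of $d$ corresponds to a state $\bar s$ of $\bar d$ with the same smoothed curves, so $|\bar s|=|s|$. Under this correspondence the roles of $a$ and $b$ are exchanged at each monochromatic crossing. At a polychromatic crossing both diagrams virtualize identically, and the coefficient changes from $v^{\epsilon}$ to $v^{-\epsilon}$.

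\textbf{Step 3: compare and use Corollary~\ref{cor:1}.} Combining the previous steps, the bracket contribution of $\bar s$ is obtained from that of $s$ by the substitution $a\mapsto b$, $b\mapsto a$, $v\mapsto v^{-1}$. The loop factor $-ab^{-1}-ba^{-1}$ is invariant under this substitution. The normalization $(-a^2b^{-1})^{-w}$ becomes $(-a^2b^{-1})^{w}$.

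To convert the swap into ``negate all exponents,'' I would use Corollary~\ref{cor:1}. Since total degree is zero, $J_\sigma$ is a function of $a/b$ and $v$, or more concretely it can be handled after setting $b=a^{-1}$. Under that specialization the swap $a\leftrightarrow b$ becomes $a\mapsto a^{-1}$. The bracket monomials, normalization factor and loop factor are then all seen to invert, for example $(-a^{3})^{-w}\mapsto(-a^{-3})^{w}$.

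The remaining subtlety is whether the paper's skein convention literally gives $a$ and $b$ as coefficients of the two smoothings, so that the three-variable statement holds exactly, or only up to the degree-zero identification. I would verify this against the monochromatic skein figure and state the result in the reduced variables if necessary. Summing over the bijection of colorings from Step 1 then gives the claim.
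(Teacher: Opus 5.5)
Your Step 1 is fine: the same semiarc labels give a $\sigma$-coloring of $\bar D$, and the monochromatic condition is unchanged. Your matching of states with the same smoothed curves is also right. The gap is in how you attach coefficients to those states.

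You assume $a$ sits on the $A$-smoothing and $b$ on the $B$-smoothing at crossings of either sign, so that a switch trades $a$ for $b$. That is not the paper's skein relation, and it cannot be. With independent $a,b$, that convention makes a direct Reidemeister II pair equal to $ab$ times the uncrossed tangle, so the state sum would not be invariant.

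The relations compatible with the stated loop value $\delta=-ab^{-1}-ba^{-1}$ and normalization $-a^2b^{-1}$ are those shown in the bracket matrix of Section~\ref{Q}, with rows $a,b,v$ and $a^{-1},b^{-1},v^{-1}$. They put $a^{\pm1}$ on the oriented smoothing and $b^{\pm1}$ on the other smoothing at a crossing of sign $\pm1$. For example, a positive kink gives $a\delta+b=-a^2b^{-1}$, and a negative kink gives $a^{-1}\delta+b^{-1}=(-a^2b^{-1})^{-1}$.

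So switching a crossing keeps the smoothing type of each state and \emph{inverts} its coefficient. Your Step 3 substitution $a\leftrightarrow b$, $v\mapsto v^{-1}$ is therefore false in three variables: it sends $a^3b^{-2}v^{-1}$ to $a^{-2}b^{3}v$ rather than $a^{-3}b^{2}v$. Also, a total-degree-zero $J_\sigma$ is a function of $a/b$ and $v/b$, not of $a/b$ and $v$.

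Your fallback to $b=a^{-1}$ is sound as far as it goes, because the two conventions coincide after that specialization. As written, though, it only yields the reduced-variable statement, which you treat as possibly all you can get. That is weaker than the proposition.

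To recover the full claim you need one more observation. The map $b\mapsto a^{-1}$ is injective on total-degree-zero Laurent polynomials, since $a^ib^jv^{-i-j}\mapsto a^{i-j}v^{-i-j}$ and $i-j$, $i+j$ determine $i,j$. It also commutes with negating exponents. Hence Corollary~\ref{cor:1} lets the reduced identity lift to three variables. (Your sample identity should read $(-a^3)^{w}=(-a^{-3})^{-w}$, not $(-a^{-3})^{w}$.)

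The paper's route avoids all of this. With the correct convention, each state monomial of $\bar d$ is that of $d$ under $a\mapsto a^{-1}$, $b\mapsto b^{-1}$, $v\mapsto v^{-1}$. The loop value $\delta$ is fixed by this substitution, and $(-a^2b^{-1})^{w}=(-a^{-2}b)^{-w}$. So the three-variable statement follows directly, with no appeal to Corollary~\ref{cor:1}.
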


\begin{proof}
It suffices to observe that simultaneously mirroring all crossings reverses
all of the crossing signs which inverts all of the skein coefficients and
negates the diagram's writhe while preserving the value of each Kauffman state 
component.
\end{proof}

\begin{proposition}\label{prop:mut}
As with the classical Jones polynomial, $J_{\sigma}$ is invariant under mutation.
\end{proposition}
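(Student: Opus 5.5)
Let $D$ be a diagram of $L$ and $D'$ a mutant obtained by cutting along a Conway circle $C$ meeting $D$ in four points and replacing the tangle $T$ inside by its rotation $\rho(T)$ about one of the three axes, with orientations reversed inside if needed. I would proceed in three steps.

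\textbf{Step 1: a bijection of colorings.} Since the coloring rule only involves $\sigma^{\pm1}$ applied according to crossing sign and over/under role, a $\sigma$-coloring restricts to a coloring of the outside tangle $S$ and of $T$, matching on the four boundary semiarcs. I would first show that the colors on the four boundary points of $T$ are compatible with the rotation: the boundary colors of $T$ where strands enter and exit are related by powers of $\sigma$ determined by the algebraic intersection data of the strands. Traveling along a strand of $T$, each crossing it passes under or over changes its color by $\sigma^{\pm1}$, and the rotated tangle has the same crossing signs and the same over/under roles. When orientations are reversed, the change is still preserved, since reversing both strands keeps signs. So a coloring of $T$ transports along $\rho$ to a coloring of $\rho(T)$ with the boundary colors permuted. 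The delicate point is that the induced boundary colors of $\rho(T)$ must agree with those of $S$. I expect to use the fact that in a 2-string tangle the two entry colors can be chosen freely, and the exit colors are $\sigma^{k}$ of the entry colors, with $k$ determined by linking data that is symmetric under $\rho$. If this fails in general, I would instead restrict to colorings and use the multiset invariance of biquandle virtual brackets.

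\textbf{Step 2: state sums on tangles.} For a fixed coloring of $T$, apply the skein relations at every crossing of $T$: smooth the monochromatic ones and virtualize the polychromatic ones. Each resulting state of $T$ is a two-string "virtual" tangle together with some closed loops. Using the purely virtual moves and the mixed virtual move, which preserve state values, each state reduces to one of the three basic tangles $[0]$, $[\infty]$, and the virtual crossing tangle $[\times]$, with a coefficient. Thus the colored state sum of $T$ is $\alpha[0]+\beta[\infty]+\gamma[\times]$ in a skein module whose coefficients lie in $\mathbb{Z}[a^{\pm1},b^{\pm1},v^{\pm1}]$.

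\textbf{Step 3: invariance under rotation.} Each of the three basic tangles is invariant under each of the three mutation rotations, including when orientations are reversed, because the state values ignore orientation. Hence $\rho(T)$ with the transported coloring has the same expansion $\alpha[0]+\beta[\infty]+\gamma[\times]$. Closing with $S$ then gives equal contributions, since the writhe is also preserved. Summing over the bijection of Step 1 gives $J_\sigma(D)=J_\sigma(D')$.

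The main obstacle is Step 1: making sure that the coloring bijection respects the boundary. For virtual tangles, strands may pick up different $\sigma$-powers. My first attempt would be to track the exponent change along each strand as a sum of crossing contributions and verify that the rotation preserves it. I would also note that the whole argument may need to be restricted to the classical case, or to the virtual cases where this symmetry holds.
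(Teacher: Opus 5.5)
Your Steps 2 and 3 are fine. They are a more careful version of the paper's one-line argument that Kauffman states of $T$ and $\rho(T)$ match up one-to-one with equal coefficients. But they need, as input, a bijection of colorings that leaves the statuses of crossings in $S$ unchanged and matches statuses in $T$ and $\rho(T)$ via $\rho$. That is your Step 1, which you do not prove; the paper simply asserts it. There are three problems with your Step 1 as written. First, as you notice, transporting a coloring of $T$ along $\rho$ permutes its boundary colors, so the result is not a coloring of $D'$. Second, the rotated tangle does not have ``the same over/under roles'' for the two axes in the projection plane: such a flip exchanges over and under at every crossing, which negates every exponent a strand picks up. Third, the fallback to multiset invariance of biquandle virtual brackets is irrelevant, since mutation is not a sequence of Reidemeister moves.

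For classical diagrams the missing idea is the one in the proof of Lemma~\ref{lem:1}. Because the constant action ignores the other strand's color, a coloring is just a free choice of base color $c_i$ per component. The color of a semiarc on $K_i$ is then $\sigma^{\lambda(R)}(c_i)$, with $\lambda$ a suitably normalized Alexander numbering of the region $R$ to its right. The two semiarcs on one side of a crossing have the same region on the same side, so a crossing is monochromatic iff its two components have equal base colors. Each rotation preserves the pairing of the four boundary points, so components of $D$ and $D'$ correspond through $S$. Alexander numbers outside the Conway circle are also unchanged, so keeping the $c_i$ fixes everything in $S$. Inside the circle, statuses match under $\rho$ with one exception. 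If $\rho$ carries each tangle arc to the position of the other and the arcs lie on different components $K_1\neq K_2$, then crossings between an arc and a closed component lying inside $T$ change status. To fix this, also apply the transposition $(c_1\,c_2)$ to the base colors of those closed components. With this bijection, Steps 2--3 complete the classical case.

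For virtual diagrams your suspicion is justified: the required matching genuinely fails. For a virtual knot a coloring is a single base color $c$. A crossing $x$ is monochromatic iff $\sigma^{m(x)}(c)=c$, where $m(x)$ is the total exponent picked up strictly between the under-passage and the over-passage of $x$; for classical knots $m(x)=0$, which is Lemma~\ref{lem:1}. Now let the tangle strands be $\alpha$ from NW to NE and $\beta$ from SE to SW, meeting in one classical and one virtual crossing. Then traversing $\alpha$ picks up $\sigma^{k}$ and traversing $\beta$ picks up $\sigma^{-k}$, with $k=\pm1$. The rotation in the plane of the diagram preserves the boundary orientations. In the mutant, the knot runs through $\rho(\alpha)$ instead of $\beta$ between the two outside arcs, picking up $\sigma^{k}$ instead of $\sigma^{-k}$. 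Hence $m(x)$ shifts by $\pm 2k$ for every crossing of $S$ joining those two outside arcs. Take $\sigma=[2,3,1]$, where status depends only on $m(x)$ mod $3$. A crossing of this kind with $m(x)=0$ in $D$ (easily arranged using one more such crossing) is polychromatic in $D'$ for all three colorings. So no crossing-by-crossing matching of the kind used in your Step 3, or in the paper's proof, exists for virtual mutation. Completed as above, your argument proves the proposition for classical links only. The virtual case would need a genuinely different argument, if it holds at all.
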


\begin{proof}
It suffices to observe that for each $\sigma$-coloring, mutation does not 
change the mono/polychomatic status of a crossing; hence as in the classical 
Jones polynomial, Kauffman states match up one-to-one with equal skein 
coefficients before and after mutation for each $\sigma$-coloring
\end{proof}

\begin{proposition}\label{prop:2}
For permutations $\sigma:X\to X$ with $|X|>1$, $J_{\sigma}$ is not generally
invariant under Kauffman virtualization (i.e., reversing an arrow in a Gauss 
diagram while fixing the crossing sign, or equivalently, flanking a classical 
crossing with virtual crossings).
\end{proposition}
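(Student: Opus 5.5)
The plan is to give a single explicit counterexample: a classical link $L$ and a Kauffman virtualization $L'$ of $L$ with $J_{\sigma}(L)\neq J_{\sigma}(L')$ for a suitable $\sigma$ with $|X|>1$. Since $J_{\sigma}$ sums over the set $\mathcal{C}_{\sigma}$ of colorings, I would try to make the two coloring sets have different sizes. The cleanest extreme is to make $\mathcal{C}_{\sigma}(L')$ empty, so that $J_{\sigma}(L')=0$, while $J_{\sigma}(L)\neq 0$.

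I would take $L$ to be the positive Hopf link, with components $L_1,L_2$, and virtualize one of its two crossings. In Gauss diagram terms this reverses the arrow between the two components while keeping the sign. The result $L'$ is a two-crossing virtual link in which both crossings are positive and $L_1$ passes over $L_2$ at both of them.

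The colorings are then read off by going once around each component, using the rule that passing under picks up $\sigma^{\mathrm{writhe}}$ and passing over picks up $\sigma^{-\mathrm{writhe}}$.
\begin{itemize}
\item In $L$, each component passes once over and once under. The net change around each component is therefore $\sigma^{0}$, so every pair $(x,y)$ of colors on the two components extends to a coloring, and $|\mathcal{C}_{\sigma}(L)|=n^2$.
\item In $L'$, the component $L_1$ passes over twice, which forces $\sigma^{-2}(x)=x$. The component $L_2$ passes under twice, which forces $\sigma^{2}(y)=y$.
\end{itemize}

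Now choose $\sigma=[2,3,1]$, a $3$-cycle. Then $\sigma^2$ has no fixed points, so $\mathcal{C}_{\sigma}(L')=\emptyset$ and $J_{\sigma}(L')=0$. For $\sigma=[2,1]$ this test would fail, since $\sigma^2=\mathrm{id}$; this is why a $3$-cycle is used.

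It remains to show $J_{\sigma}(L)\neq 0$; this is the only step needing care, since contributions from different colorings could in principle cancel. I would split the colorings of $L$ into two kinds.
\begin{itemize}
\item In the colorings with $x=y$, both crossings are monochromatic. Each of these contributes exactly the $J_{[1]}$-value of the Hopf link, which is a $v$-free nonzero Laurent polynomial in $a,b$ (its specialization $b=a^{-1}$ is the Kauffman bracket of the Hopf link, up to the unknot factor).
\item In the colorings with $x\neq y$, both crossings are polychromatic. Both are then virtualized with coefficient $v^{+1}$ each, so each such coloring contributes a term carrying $v^{2}$ (times the normalization and the value of the resulting state).
\end{itemize}
The $v$-free part of $J_{\sigma}(L)$ is therefore $3J_{[1]}(\mathrm{Hopf})\neq 0$, so no cancellation can occur, and $J_{\sigma}(L)\neq J_{\sigma}(L')$.

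If one prefers, the same argument also works with $\sigma$ any permutation having no fixed points of $\sigma^2$. One could alternatively compare the $v$-parts for a knot example, but the Hopf link keeps the computation minimal.
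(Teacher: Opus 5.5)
Your proof is correct, but it uses a different counterexample and a different mechanism from the paper's.

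The paper takes the virtual knot $3.1$ with $\sigma=[2,3,1]$ and Kauffman-virtualizes one crossing to get the unknot. It then compares the computed values $3a^3b^{-2}v^{-1}+3av^{-1}$ and $-3ab^{-1}-3ba^{-1}$. For a knot, every classical crossing is traversed once over and once under, so every (virtual) knot diagram has exactly $|X|$ colorings. The discrepancy in the paper's example therefore comes only from virtualization changing which crossings are monochromatic, and it is found by an explicit state-sum computation.

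You instead use the fact that for a link, virtualizing an inter-component crossing changes the net power of $\sigma$ picked up around each component. This lets the coloring set of $L'$ collapse to $\emptyset$. The comparison then reduces to a coloring count plus the nonvanishing of the $v$-free part $3J_{[1]}(\mathrm{Hopf})$. All of this can be checked by hand, it makes the mechanism transparent, and it works for every $\sigma$ with $\sigma^2$ fixed-point-free. The paper's example buys something yours does not: it shows the failure already occurs for knots, where the number of colorings cannot change.

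One point to state precisely. Under the convention the paper must be using (forced by its remark that Reidemeister I crossings are always monochromatic), a crossing is monochromatic exactly when the incoming over-strand color equals the outgoing under-strand color. For the Hopf link, this says the two semiarcs bounding the common region to the left of both strands at each crossing have equal colors. Your ``$x=y$'' is right only if $x,y$ are taken to be the colors of those two semiarcs; with other base semiarcs the condition reads $x=\sigma^{\pm1}(y)$. With that choice there are indeed $3$ all-monochromatic and $6$ all-polychromatic colorings, as you claim.
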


\begin{proof}
Consider the case of the virtual knot $3.1$ 
\[\includegraphics{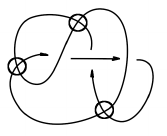}\]
with the permutation 
$\sigma=[2,3,1]$. We compute that $J_{[2,3,1]}(3.1)=3a^3b^{-2}v^{-1}+3av^{-1}$
while Kauffman-virtualizing the leftmost crossing yields the unknot 
\[\includegraphics{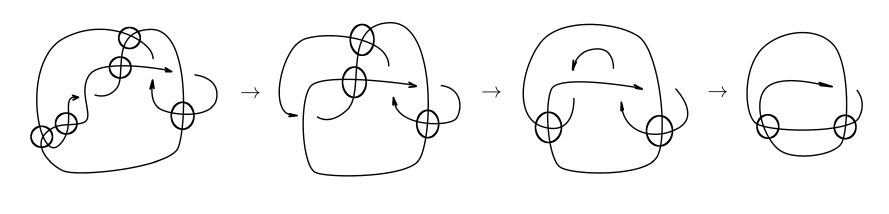}\]
with
$J_{[2,3,1]}(K')=-3ab^{-1}-3ba^{-1}$.
%>>> permskein([[-1.5, 2, 3.5, -2, 1.5, -3.5]],[2,3,1])
%3*a**3/(b**2*v) + 3*a/v
%>>> permskein([[1.5, 2, -3.5, -2, -1.5, 3.5]],[2,3,1])
%-3*a/b - 3*b/a
\end{proof}

\begin{proposition}
For general permutations $\sigma$, $J_{\sigma}$ is not determined by the Jones
polynomial.
\end{proposition}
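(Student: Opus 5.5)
The plan is to exhibit two knots or links $L$ and $L'$ with the same Jones polynomial but different $J_{\sigma}$-values for some permutation $\sigma$. Since Lemma \ref{lem:1} and the corollary following it show that for classical knots $J_{\sigma}$ is just $|\mathcal{C}_{\sigma}(K)|J_{[1]}(K)$, the example must come from classical links or from virtual knots. I will take $L$ to be a virtual knot and $L'$ to be a classical knot, or two virtual knots, with equal Jones polynomial.

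The most economical choice is the pair used in the proof of Proposition \ref{prop:2}. Kauffman virtualization is known to preserve the Jones polynomial, since the bracket state sum does not see the virtual flanking. Hence the virtual knot $3.1$ and the knot $K'$ obtained by virtualizing one crossing, which the paper identifies as the unknot, have the same Jones polynomial. I will verify this directly from the definitions, and not only cite the known fact. For $\sigma=[1]$ every crossing is monochromatic and the single coloring gives the Kauffman bracket. Flanking a crossing by virtual crossings changes neither its sign nor the two smoothings' component counts, because the component count of a state with virtual crossings depends only on the Gauss-code connectivity of the smoothing. Therefore $J_{[1]}(3.1)=J_{[1]}(K')=-ab^{-1}-ba^{-1}$, and after the substitution $b=a^{-1}$ and normalization both have Jones polynomial $1$.

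On the other hand, the computation quoted in the proof of Proposition \ref{prop:2} gives $J_{[2,3,1]}(3.1)=3a^3b^{-2}v^{-1}+3av^{-1}$ and $J_{[2,3,1]}(K')=-3ab^{-1}-3ba^{-1}$. These are visibly different; for instance, the first contains the variable $v$ and the second does not. So two knots with equal Jones polynomial have different $J_{[2,3,1]}$, and hence $J_{\sigma}$ is not a function of the Jones polynomial.

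The main obstacle is rigorously confirming that $3.1$ has trivial Jones polynomial. I would do this by a short explicit state sum over the three crossings for $\sigma=[1]$, tracking component counts through the Gauss code. As an alternative or supplementary classical example, I could use the corollary on split links: take a two-component non-split classical link whose $[2,1]$-colorings include polychromatic crossings, such as a Thistlethwaite link with trivial Jones polynomial compared against the unlink. A nonzero power of $v$ then certifies the difference, but that route requires a larger computation, so I would present the virtual example first.
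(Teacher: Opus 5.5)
Your argument is correct and is essentially the paper's proof. The paper likewise takes the virtual knot $3.1$ and its Kauffman virtualization (the unknot), observes that both have trivial Jones polynomial, and uses the differing $J_{[2,3,1]}$-values from Proposition~\ref{prop:2}. Your local check that virtual flanking preserves the crossing sign, the smoothing coefficients and the state component counts makes explicit a step the paper only asserts.

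I would drop the Thistlethwaite aside. The split-link corollary only runs one way: a surviving $v$-power implies the link is non-split, but non-split links and polychromatic crossings need not leave any $v$-power. For example, the non-split $L5a1$ has no $v$ in its $J_{[2,1]}$-value. The paper also explicitly leaves open whether any Jones-equivalent classical links are separated by some $J_{\sigma}$.
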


\begin{proof}
In fact the same example from the proof of proposition \ref{prop:2} 
establishes this result as well since both $3.1$ and its virtualization 
the unknot have trivial Jones polynomial. Other examples are plentiful.
\end{proof}

\begin{proposition}
For general permutations $\sigma$, $J_{\sigma}$ does not determine the Jones
polynomial.
\end{proposition}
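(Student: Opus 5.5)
The proposition claims that $J_{\sigma}$ does not determine the Jones polynomial. To prove it, it suffices to exhibit a permutation $\sigma$ and two virtual knots or links $L,L'$ with $J_{\sigma}(L)=J_{\sigma}(L')$ but different Jones polynomials. The cleanest source of examples is the phenomenon in the Remark: for suitable virtual diagrams, $\mathcal{C}_{\sigma}(L)$ is empty, which forces $J_{\sigma}(L)=0$. Two such links with distinct Jones polynomials then finish the proof.

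Concretely, I would take $\sigma=[2,1]$ and the virtual Hopf link $VH$ from the Remark, where we already know $\mathcal{C}_{[2,1]}(VH)=\emptyset$ and hence $J_{[2,1]}(VH)=0$. For the second link, two natural choices are
\begin{itemize}
\item the mirror image $\overline{VH}$, since its single classical crossing again forces $x=\sigma(x)$ and $y=\sigma(y)$, so it also has no colorings;
\item a virtual Hopf link with two classical crossings of the same sign flanked appropriately, for which the coloring condition again demands fixed points of $\sigma$ (or of $\sigma^{2}$ composed suitably).
\end{itemize}
The simplest route is $VH$ versus $\overline{VH}$. A direct Kauffman bracket computation with a single crossing gives Jones polynomials that differ by $a\leftrightarrow a^{-1}$ and are not symmetric, specifically of the form $-a^{\pm 3}$ up to normalization. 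So the two Jones polynomials are distinct while $J_{[2,1]}=0$ for both.

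If one prefers knots to links, I would instead look for virtual knots with no $[2,1]$-colorings, or with $[2,3,1]$-colorings in which every crossing is polychromatic. In that second case each state sum collapses to $v^{\sum \mathrm{signs}}$ times a single smoothing. Knots such as $2.1$ and its mirror or variants are candidates: all-polychromatic colorings make $J_{\sigma}$ depend only on the writhe and on the circle count of the all-virtualized diagram, which is the unknot. Two knots with equal writhe and the same coloring count but different Jones polynomials would then give equal $J_{\sigma}$.

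The main obstacle is verifying the Jones polynomials concretely and checking the coloring counts, which means a careful sign and orientation check at the crossing. The empty-coloring example avoids almost all of this work, so I would present that one first.
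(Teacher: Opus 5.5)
Your main argument is correct, and it takes a genuinely different route from the paper's.

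\textbf{Comparison with the paper.} The paper uses one machine-computed example. The virtual knot $4.2$ has $J_{[2,3,1]}(4.2)=-3ab^{-1}-3ba^{-1}$, which is the unknot's value, while its Jones polynomial is nontrivial.

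You instead use the degenerate case from the Remark. In the virtual Hopf link each component is a single semiarc, so its color must be fixed by $\sigma^{\pm1}$. Hence for any fixed-point-free $\sigma$, both the virtual Hopf link and its mirror have empty coloring sets, so $J_\sigma=0$ for both, while their Jones polynomials differ.

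What each approach buys:
\begin{itemize}
\item Your argument is checkable by hand and works at once for every derangement $\sigma$; the paper's covers only $\sigma=[2,3,1]$.
\item The paper's example is stronger in content. It is a knot, with its full set of $|X|$ colorings and a nonzero $J_\sigma$-value, so it shows real information loss rather than mere vanishing.
\item Your trick cannot reach knots. Traversing a knot passes each classical crossing once under and once over, contributing $\sigma^{\epsilon}\sigma^{-\epsilon}=\mathrm{id}$. So every virtual knot has exactly $|X|$ $\sigma$-colorings and $J_\sigma$ is never an empty sum.
\end{itemize}

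\textbf{The Jones computation.} The Jones polynomials are not ``of the form $-a^{\pm3}$.'' Both diagrams have unknot-normalized bracket $a+a^{-1}$, since each of the two states is one circle. Only the writhe factor $(-a^{3})^{\mp1}$ separates them, giving $-a^{-2}-a^{-4}$ and $-a^{2}-a^{4}$. Equivalently, in the paper's normalization the $J_{[1]}$-values are $1+ba^{-1}+b^2a^{-2}+b^3a^{-3}$ and $1+ab^{-1}+a^2b^{-2}+a^3b^{-3}$. This asymmetry is the entire content of your distinction, so write it out explicitly.

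\textbf{The fallbacks do not work as stated.}
\begin{itemize}
\item By the observation above, no virtual knot lacks $[2,1]$-colorings.
\item The knots $2.1$ and its mirror are distinguished by $J_{[2,3,1]}$. Mirroring negates all exponents, and $-3bv^2a^{-3}-3b^3v^2a^{-5}$ is not fixed by that. They also have opposite writhe, which violates your own equal-writhe criterion.
\item A two-crossing virtual Hopf link with crossings of equal sign has monodromy $\sigma^{\pm2}$ or $\mathrm{id}$ on each component. For $\sigma=[2,1]$ this is trivial, so the link has four colorings and no fixed-point obstruction arises.
\end{itemize}
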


\begin{proof}
The virtual knot $4.2$ 
\[\includegraphics{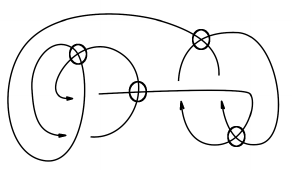}\]
has trivial $J_{[2,3,1]}$-value of $-3ab^{-1}-3ba^{-1}$
but nontrival Jones polynomial value 
\[J_{[1]}(4.2)=a^4b^{-4}+a^3b^{-3}-2ab^{-1}-2-2ba^{-1}+b^3a^{-3}+b^4a^{-4}.\]
\end{proof}

\section{Computations and Examples}\label{CE}

In this section we collect some example computations and results.

\begin{example}\label{ex:2}
Let us illustrate the computation of $J_{[2,1]}$ for the Hopf link $L2a1$. 
In Example \ref{ex:1} we showed the four $[2,1]$-colorings of $L2a1$. Starting 
with the first coloring and applying the skein relations, we have
\[\includegraphics{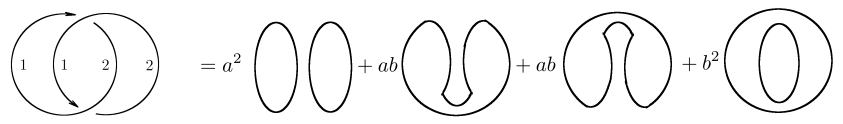}\]
while the second coloring yields
\[\includegraphics{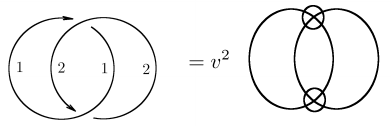}.\]
The third and fourth colorings yield the same results as the second and first
respectively, so summing them and multiplying by the writhe adjustment factor
of $(-a^2b^{-1})^{-2}$ we have
\begin{eqnarray*}
J_{[2,1]}(L2a1) 
& = & (-a^2b^{-1})^{-2}(2a^2(-ab^{-1}-ba^{-1})^2+4ab(-ab^{-1}-ba^{-1})+2b^2(-ab^{-1}-ba^{-1})^2 \\ & & \quad+2v^2(-ab^{-1}-ba^{-1})^2) \\
& = & (b^2a^{-4})(2a^2(a^2b^{-2}+2+b^2a^{-2})+4ab(-ab^{-1}-ba^{-1})+2b^2(a^2b^{-2}+2+b^2a^{-2}) \\ & & \quad +2v^2(a^2b^{-2}+2+b^2a^{-2})) \\
& = & (b^2a^{-4})(
2a^4b^{-2}+4a^2+2b^2
-4a^2-4b^2
+2a^2+4b^2+2b^4a^{-2} \\ & & \quad
+2v^2a^2b^{-2}+4v^2+2v^2b^2a^{-2}) \\
%& = & 
%2+2a^{-4}b^4
%+2a^{-2}b^2+2a^{-6}b^6 
%+2v^2a^{-2}+4v^2a^{-4}b^2+2v^2a^{-6}b^4 \\
& = & 2+2b^2a^{-2}+2v^2a^{-2}+2b^4a^{-4}+4b^2v^2a^{-4}+2b^6a^{-6}+2b^4v^2a^{-6}.
\end{eqnarray*}
The fact that the terms contain nonzero powers of the $v$ variable
shows that this two-component classical link is nonsplit.
\end{example}

\begin{example}
We list the $J_{\sigma}$-values for the prime classical links with up to
7 crossings for $\sigma=[2,1]$ as computed by our \texttt{python} code
in the table.
\[\begin{array}{r|l}
L & J_{[2,1]}(L) \\ \hline
 & \\
L2a1 & 2 + 2b^2a^{-2} + 2v^2a^{-2} + 2b^4a^{-4} + 4b^2v^2a^{-4} + 2b^6a^{-6} + 2b^4v^2a^{-6}\\ 
L4a1 & 2 + 2b^6a^{-6} + 2b^2v^4a^{-6} + 2b^8a^{-8} + 4b^4v^4a^{-8} + 2b^{10}a^{-10} + 2b^6v^4a^{-10}\\ 
L5a1 & -2a^8b^{-8} + 2a^6b^{-6} + 2a^4b^{-4} + 4a^2b^{-2} + 6 + 2b^2a^{-2} + 2b^4a^{-4}\\ 
L6a1 & 2a^4b^{-4} - 2a^2b^{-2} + 2b^4a^{-4} + 4b^6a^{-6} + 2b^2v^4a^{-6} + 4b^4v^4a^{-8} + 2b^{10}a^{-10} + 2b^6v^4a^{-10}\\ 
L6a2 & 2b^2a^{-2} + 2b^6a^{-6} + 2b^4v^6a^{-10} + 2b^{12}a^{-12} + 4b^6v^6a^{-12} + 2b^8v^6a^{-14} + 2b^{16}a^{-16}\\ 
L6a3 & 2b^4a^{-4} + 2b^6a^{-6} + 2b^8a^{-8} + 2b^4v^6a^{-10} + 4b^6v^6a^{-12} + 2b^8v^6a^{-14} + 2b^{18}a^{-18}\\ 
L6a4 & 2a^7b^{-7} - 4a^5b^{-5} - 8a^3b^{-3} - 22ab^{-1} - 22ba^{-1} - 8b^3a^{-3} - 4b^5a^{-5} + 2b^7a^{-7}\\ 
L6a5 & -2ba^{-1} + 2b^3a^{-3} - 2b^5a^{-5} - 4b^7a^{-7} - 6b^3v^4a^{-7} - 4b^9a^{-9} - 12b^5v^4a^{-9} - 4b^{11}a^{-11} \\ 
& \ - 12b^7v^4a^{-11} - 12b^9v^4a^{-13} - 2b^{15}a^{-15} - 6b^{11}v^4a^{-15}\\ 
L6n1 & -2b^3a^{-3} - 2b^5a^{-5} - 2b^7a^{-7} - 6b^3v^4a^{-7} - 2b^9a^{-9} - 12b^5v^4a^{-9} - 4b^{11}a^{-11} \\ 
&\  - 12b^7v^4a^{-11} - 4b^{13}a^{-13} - 12b^9v^4a^{-13} - 6b^{11}v^4a^{-15}\\ 
L7a1 & 2a^{10}b^{-10} - 4a^8b^{-8} + 2a^6b^{-6} + 4a^2b^{-2} + 8 + 2b^2a^{-2} + 4b^4a^{-4} - 2b^6a^{-6}\\ 
L7a2 & 2b^2a^{-2} - 2b^4a^{-4} + 4b^6a^{-6} + 2b^8a^{-8} + 2b^4v^4a^{-8} + 2b^{10}a^{-10} + 4b^6v^4a^{-10} + 2b^{12}a^{-12} \\ 
& \ + 4b^8v^4a^{-12} - 2b^{14}a^{-14} + 2b^{10}v^4a^{-14} + 2b^{16}a^{-16} - 2b^{12}v^4a^{-16} - 2b^{18}a^{-18} - 2b^{14}v^4a^{-18}\\ 
L7a3 & -2a^{14}b^{-14} \\ 
& \ + 2a^{12}b^{-12} - 4a^{10}b^{-10} - 2a^8b^{-8} + 4a^6b^{-6} + 6a^4b^{-4} + 8a^2b^{-2} + 2 + 2b^2a^{-2}\\ 
L7a4 & -2a^{12}b^{-12} + 2a^{10}b^{-10} + 2a^6b^{-6} + 2a^2b^{-2} + 8 + 2b^2a^{-2} + 2b^4a^{-4}\\ 
L7a5 & 2a^4b^{-4} - 2a^2b^{-2} + 2 + 2v^2a^{-2} + 2b^4a^{-4} + 4b^2v^2a^{-4} + 4b^6a^{-6} + 2b^4v^2a^{-6} + 2b^{10}a^{-10} - 2b^{12}a^{-12}\\ 
L7a6 & -2a^8b^{-8} + 2a^6b^{-6} + 2a^2b^{-2} + 2 + 2v^2a^{-2} + 2b^4a^{-4} + 4b^2v^2a^{-4} + 2b^4v^2a^{-6} + 2b^8a^{-8}\\ 
L7a7 & 2a^7b^{-7} - 4a^5b^{-5} - 6ab^{-1} - 10ba^{-1} - 2v^4a^{-1}b^{-3} - 10b^3a^{-3} - 4v^4a^{-3}b^{-1} - 14b^5a^{-5} - 4bv^4a^{-5} \\ 
& \ - 4b^7a^{-7} - 4b^3v^4a^{-7} - 2b^9a^{-9} - 2b^5v^4a^{-9}\\ 
L7n1 & 2a^8b^{-4}v^{-4} + 2a^8b^{-8} + 4a^6b^{-2}v^{-4} + 2a^6b^{-6} + 4a^4v^{-4} + 2a^4b^{-4} + 2a^2b^2v^{-4} \\ 
& \ + 2a^2b^{-2} - 2b^4v^{-4} + 2 - 2b^6a^{-2}v^{-4} - 2b^4a^{-4}\\ 
L7n2 & 6 + 6b^2a^{-2} + 6b^4a^{-4} + 4b^6a^{-6} - 2b^8a^{-8} - 2b^{10}a^{-10} - 2b^{12}a^{-12}
\end{array}
\]
\end{example}

\begin{example}
We list the $J_{\sigma}$-values for the prime virtual knots with up to
4 classical crossings for $\sigma=[2,1]$  and $\sigma=[2,3,1]$ 
as computed by our \texttt{python} 
code in the tables.
\[\begin{array}{r|l}
J_{[2,1]}(K) & K \\ \hline
& \\
-2bv^2a^{-3} - 2b^3v^2a^{-5} & 2.1,\ 4.28,\ 4.84,\ 4.88,\ 4.104\\
-2ab^{-1} - 2ba^{-1} & 3.1,\ 3.7,\ 4.2,\ 4.6,\ 4.8,\ 4.12,\ 4.13,\ 4.17,\ 4.19,\ 4.26,\\ & 4.32,\ 4.35,\ 4.42,\ 4.46,\ 4.47,\ 4.51,\ 4.55,\ 4.56,\ 4.58,\\ & 4.59,\ 4.66,\ 4.67,\ 4.71,\ 4.72,\ 4.75,\ 4.76,\ 4.77, 4.85,\\ & 4.93,\ 4.96, 4.97,\ 4.98,\ 4.102,\ 4.103,\ 4.106,\ 4.107  \\
-2a^5b^{-3}v^{-2} - 2a^3b^{-1}v^{-2} & 3.2,\ 3.3,\ 3.4,\ 4.4,\ 4.5,\ 4.11,\ 4.18,\ 4.27,\ 4.30,\ 4.33,\\ & 4.38,\ 4.39,\ 4.44,\ 4.45,\ 4.49,\ 4.54,\ 4.62,\ 4.63,\ 4.74,\\ & 4.81,\ 4.82,\ 4.83,\ 4.87,\ 4.92,\ 4.94,\ 4.95,\ 4.101 \\
2a^9b^{-9} - 2a^5b^{-5} - 2a^3b^{-3} - 2ab^{-1} & 3.5,\ 4.89,\ 4.105 \\
-2ba^{-1} - 2b^3a^{-3} - 2b^5a^{-5} + 2b^9a^{-9} & 3.6 \\
-2a^9b^{-5}v^{-4} - 2a^7b^{-3}v^{-4} & 4.1,\ 4.3,\ 4.7,\ 4.25,\ 4.43,\ 4.53,\ 4.73,\ 4.80,\ 4.91,\\ & 4.100 \\
2a^{10}b^{-8}v^{-2} - 2a^7b^{-5}v^{-2} - 2a^6b^{-4}v^{-2} - 2a^5b^{-3}v^{-2} & 4.9,\ 4.15,\ 4.29,\ 4.37,\ 4.48,\ 4.61,\ 4.69,\ 4.78\\ 
2a^6b^{-6} - 2a^3b^{-3} - 2a^2b^{-2} - 2ab^{-1} & 4.10,\ 4.16,\ 4.23,\ 4.31,\ 4.41,\ 4.50,\ 4.57,\ 4.65,\ 4.70,\\ &  4.79 \\
-2a^3b^{-1}v^{-2} - 2a^2v^{-2} - 2abv^{-2} + 2b^4a^{-2}v^{-2} & 4.14,\ 4.20,\ 4.22,\ 4.34,\ 4.40,\ 4.52,\ 4.60,\ 4.64 \\
-2ba^{-1} - 2b^2a^{-2} - 2b^3a^{-3} + 2b^6a^{-6} & 4.21,\ 4.24,\ 4.36,\ 4.68 \\
-2a^5b^{-5} - 2b^5a^{-5} & 4.86,\ 4.90,\ 4.99,\ 4.108 \\
\end{array}
%[[-2*b*v**2/a**3 - 2*b**3*v**2/a**5.[(2,1), (4, 28), (4, 84), (4, 88), (4, 104)]], [-2*a/b - 2*b/a, [(3, 1), (3, 7), (4, 2), (4, 6), (4, 8), (4, 12), (4, 13), (4, 17), (4, 19), (4, 26), (4, 32), (4, 35), (4, 42), (4, 46), (4, 47), (4, 51), (4, 55), (4, 56), (4, 58), (4, 59), (4, 66), (4, 67), (4, 71), (4, 72), (4, 75), (4, 76), (4, 77), (4, 85), (4, 93), (4, 96), (4, 97), (4, 98), (4, 102), (4, 103), (4, 106), (4, 107)]], [-2*a**5/(b**3*v**2) - 2*a**3/(b*v**2), [(3, 2), (3, 3), (3, 4), (4, 4), (4, 5), (4, 11), (4, 18), (4, 27), (4, 30), (4, 33), (4, 38), (4, 39), (4, 44), (4, 45), (4, 49), (4, 54), (4, 62), (4, 63), (4, 74), (4, 81), (4, 82), (4, 83), (4, 87), (4, 92), (4, 94), (4, 95), (4, 101)]], [2*a**9/b**9 - 2*a**5/b**5 - 2*a**3/b**3 - 2*a/b, [(3, 5), (4, 89), (4, 105)]], [-2*b/a - 2*b**3/a**3 - 2*b**5/a**5 + 2*b**9/a**9, [(3, 6)]], [-2*a**9/(b**5*v**4) - 2*a**7/(b**3*v**4), [(4, 1), (4, 3), (4, 7), (4, 25), (4, 43), (4, 53), (4, 73), (4, 80), (4, 91), (4, 100)]], [2*a**10/(b**8*v**2) - 2*a**7/(b**5*v**2) - 2*a**6/(b**4*v**2) - 2*a**5/(b**3*v**2), [(4, 9), (4, 15), (4, 29), (4, 37), (4, 48), (4, 61), (4, 69), (4, 78)]], [2*a**6/b**6 - 2*a**3/b**3 - 2*a**2/b**2 - 2*a/b, [(4, 10), (4, 16), (4, 23), (4, 31), (4, 41), (4, 50), (4, 57), (4, 65), (4, 70), (4, 79)]], [-2*a**3/(b*v**2) - 2*a**2/v**2 - 2*a*b/v**2 + 2*b**4/(a**2*v**2), [(4, 14), (4, 20), (4, 22), (4, 34), (4, 40), (4, 52), (4, 60), (4, 64)]], [-2*b/a - 2*b**2/a**2 - 2*b**3/a**3 + 2*b**6/a**6, [(4, 21), (4, 24), (4, 36), (4, 68)]], [-2*a**5/b**5 - 2*b**5/a**5, [(4, 86), (4, 90), (4, 99), (4, 108)]]]
\]

\[\begin{array}{r|l}
K & J_{[2,3,1]}(K) \\ \hline
& \\
-3bv^2a^{-3} - 3b^3v^2a^{-5} & 2.1,\ 4.21,\ 4.36 \\
3a^3b^{-2}v^{-1} + 3a/v & 3.1,\ 3.4,\ 4.10,\ 4.17,\ 4.19,\ 4.20,\ 4.23,\ 4.32,\\ & 4.34,\ 4.35,\ 4.38,\ 4.45,\ 4.47,\ 4.49,\ 4.50,\ 4.57,\\ & 4.70,\ 4.83,\ 4.88,\ 4.97,\ 4.103\\
-3a^5b^{-3}v^{-2} - 3a^3b^{-1}v^{-2} & 3.2,\ 3.5,\ 3.7,\ 4.4,\ 4.5,\ 4.18,\ 4.27,\ 4.30,\ 4.33,\\ & 4.44,\ 4.54,\ 4.62,\ 4.65,\ 4.74,\ 4.79,\ 4.85,\ 4.86,\\ & 4.94,\ 4.96,\ 4.106\\
3a^7b^{-4}v^{-3} + 3a^5b^{-2}v^{-3}  & 3.3,\ 4.11,\ 4.15,\ 4.29,\ 4.63,\ 4.80,\ 4.81,\ 4.87,\\ & 4.93\\
-3ba^{-1} - 3b^3a^{-3} - 3b^5a^{-5} + 3b^9a^{-9} & 3.6 \\
-3a^9b^{-5}v^{-4} - 3a^7b^{-3}v^{-4} & 4.1,\ 4.3,\ 4.7,\ 4.25,\ 4.37,\ 4.43,\ 4.48,\ 4.53,\\ & 4.73,\ 4.82,\ 4.89,\ 4.100 \\
-3ab^{-1} - 3ba^{-1} & 4.2,\ 4.6,\ 4.8,\ 4.12,\ 4.13,\ 4.14,\ 4.22,\ 4.46,\\ & 4.51,\ 4.55,\ 4.56,\ 4.58,\ 4.59,\ 4.64,\ 4.66,\ 4.71,\\ & 4.72,\ 4.75,\ 4.76,\ 4.77,\ 4.84,\ 4.90,\ 4.98,\ 4.107 \\
3a^{10}b^{-8}v^{-2} - 3a^7b^{-5}v^{-2} - 3a^6b^{-4}v^{-2} - 3a^5b^{-3}v^{-2} & 4.9,\ 4.61,\ 4.69,\ 4.78,\ 4.91 \\
3a^6b^{-6} - 3a^3b^{-3} - 3a^2b^{-2} - 3ab^{-1} & 4.16,\ 4.31,\ 4.39,\ 4.41,\ 4.95,\ 4.101\\
-3ba^{-1} - 3b^2a^{-2} - 3b^3a^{-3} + 3b^6a^{-6} & 4.24,\ 4.68\\
3va^{-1} + 3b^2va^{-3} & 4.26,\ 4.28,\ 4.67\\
-3a^3b^{-1}v^{-2} - 3a^2v^{-2} - 3abv^{-2} + 3b^4a^{-2}v^{-2} & 4.40,\ 4.42,\ 4.52,\ 4.60,\ 4.102\\
3a^{10}b^{-10} + 3a^9b^{-9} - 3a^7b^{-7} - 3a^6b^{-6} - 3a^5b^{-5} - 3ab^{-1} & 4.92\\ 
-3a^5b^{-5} - 3b^5a^{-5} & 4.99,\ 4.108 \\
-3a^2b^{-2} - 3ab^{-1} + 3b^2a^{-2} - 3b^5a^{-5} & 4.104\\
3a^9b^{-9} - 3a^5b^{-5} - 3a^3b^{-3} - 3ab^{-1} & 4.105
\end{array}
\]

\end{example}

\section{Questions}\label{Q}

We end with several observations and questions for future research.

First and foremost, the permutation Jones polynomials are a type of biquandle 
virtual bracket (see \cite{NOSY}). There are examples of other biquandles 
for which the pattern of skein coefficients from permutation Jones polynomials
also yields a valid biquandle virtual bracket. For example, the non-constant 
action biquandle given by the operation tables
\[
\begin{array}{r|rrr}
\utr & 1 & 2 & 3 \\ \hline
1 & 2 & 2 & 1 \\
2 & 1 & 1 & 2 \\
3 & 3 & 3 & 3
\end{array}
\quad
\begin{array}{r|rrr}
\utr & 1 & 2 & 3 \\ \hline
1 & 2 & 2 & 1 \\
2 & 1 & 1 & 2 \\
3 & 3 & 3 & 3
\end{array}
\]
has biquandle virtual bracket 
\[
\left[
\begin{array}{ccc|ccc|ccc}
a & 0 & 0 & b & 0 & 0 & 0 & v & v \\
0 & a & 0 & 0 & b & 0 & v & 0 & v \\
0 & 0 & a & 0 & 0 & b & v & v & 0 \\ \hline
a^{-1} & 0 & 0 & b^{-1} & 0 & 0 & 0 & v^{-1} & v^{-1} \\
0 & a^{-1} & 0 & 0 & b^{-1} & 0 & v^{-1} & 0 & v^{-1} \\
0 & 0 & a^{-1} & 0 & 0 & b^{-1} & v^{-1} & v^{-1} & 0
\end{array}\right].\]
How can we characterize and classify biquandles for which the permutation
Jones polynomial skein coefficients satisfy the biquandle virtual bracket 
axioms? Which properties of the permutation Jones poloynomials hold for these
other biquandle virtual brackets? 

Relatedly, the multiset of state-sum values over the set of $\sigma$-colorings
itself is an invariant; summing these to obtain a single polynomial may
lose some information. That is, there may be cases where two links have
equal $J_{\sigma}$ values but different state-sum multisets which sum to the
same overall polynomial. Are there such cases among classical links, virtual 
knots or virtual links? 

Are there Jones-equivalent classical links which are distinguished from
each other by $J_{\sigma}$ for some nontrivial permutation $\sigma$? For 
classical knots, the answer is no, but for virtual knots and links the 
answer is yes. No such example for classical links is found in our table
of small crossing number links for small $\sigma$, but there are infinitely
many $J_{\sigma}$s and infinitely many classical links we have yet to check.

Experimental computation shows that in many cases the permutation
Jones polynomials agree for a wide variety of permutations of the same
set $X$ for certain knots and links. What properties of two permutations 
$\sigma_1,\sigma_2:X\to X$ (e.g., cycle structure etc.) ensure that 
$J_{\sigma_1}=J_{\sigma_2}$ for all knots or links of certain types?

Speaking of computation, for $|X|>1$ the existence of multiple colorings 
means more total states to compute; however, each coloring's state-sum has 
only $2^{mc}\le 2^{cn}$ states where $mc$ is the number of monochromatic
colorings and $cn$ is the total crossing number, and so generally contains 
fewer states than the standard Jones polynomial. Moreover, for some 
permutations the contributions from different colorings may predictably 
coincide as in Example \ref{ex:2}, enabling faster computation by exploiting 
symmetries. Efficient computational algorithms are of interest here as always.

Finally, what enhancements and categorifcation of these invariants
can be defined and how are they related to each other and to other invariants?

\bibliography{sn-solo26}{}
\bibliographystyle{abbrv}

\bigskip

\noindent
\textsc{Department of Mathematical Sciences \\
Claremont McKenna College \\
850 Columbia Ave. \\
Claremont,  CA 91711}

\end{document}